\newtheorem{theorem}{Theorem}[section]
\newtheorem{lemma}[theorem]{Lemma}
\newtheorem{cor}[theorem]{Corollary}
\DeclareMathOperator{\im}{im}
\DeclareMathOperator{\ind}{ind}
\DeclareMathOperator{\spa}{span}
\title{The Index Bundle for Selfadjoint Fredholm Operators and Multiparameter Bifurcation for Hamiltonian Systems}
\author{Robert Skiba and Nils Waterstraat}
\begin{document}
\date{}
\maketitle

\footnotetext[1]{{\bf 2010 Mathematics Subject Classification: Primary 58E07; Secondary 37J20, 34C23}}

\begin{abstract}
\noindent
The index of a selfadjoint Fredholm operator is zero by the well-known fact that the kernel of a selfadjoint operator is perpendicular to its range. The Fredholm index was generalised to families by Atiyah and J\"anich in the sixties, and it is readily seen that on complex Hilbert spaces this so called index bundle vanishes for families of selfadjoint Fredholm operators as in the case of a single operator. The first aim of this note is to point out that for every real Hilbert space and every compact topological space $X$ there is a family of selfadjoint Fredholm operators parametrised by $X\times S^1$ which has a non-trivial index bundle. 
Further, we use this observation and a family index theorem of Pejsachowicz to study multiparameter bifurcation of homoclinic solutions of Hamiltonian systems, where we generalise a previously known class of examples.

\vskip1truecm
\centerline{\textbf{Keywords: Index Bundle, Fredholm Operators, Homoclinics of Hamiltonian Systems} }
\end{abstract}

\section{Introduction}
A bounded operator $T\in\mathcal{L}(H)$ on a Hilbert space $H$ is called \textit{Fredholm} if it has finite dimensional kernel and cokernel. The difference of the dimensions of these spaces is called the \textit{Fredholm index} of $T$. As the kernel of the adjoint of an operator satisfies $\ker(T^\ast)=\im(T)^\perp$, it follows that $\ind(T)=\dim\ker(T)-\dim\ker(T^\ast)$. Hence, if $T$ is selfadjoint, the index vanishes.\\
Atiyah and J\"anich independently introduced a generalisation of the Fredholm index to families of operators which is called the \textit{index bundle}. For a compact topological space $\Lambda$ and a family $L=\{L_\lambda\}_{\lambda\in\Lambda}\subset\mathcal{L}(H)$ of Fredholm operators on $H$, this is an element of the K-theory group $K(\Lambda)$ if $H$ is a complex Hilbert space, or an element of $KO(\Lambda)$ if $H$ is a real Hilbert space. It is not difficult to see that the index bundle of the family of adjoint operators $L^\ast=\{L^\ast_\lambda\}_{\lambda\in\Lambda}$ satisfies $\ind(L^\ast)=-\ind(L)$. Thus we obtain for a family of selfadjoint operators $2\ind(L)=0$. This, however, does not imply that $\ind(L)$ is trivial in the corresponding $K$-theory group if there are elements of order $2$. If $H$ is a complex Hilbert space, it can still be shown that the index bundle of a family of selfadjoint Fredholm operators is trivial. This fact can be found in \cite[Ex. 3.36]{Booss}, and we also recall it in Section 2 below. The argument, however, strongly uses that $H$ is complex and so it is not clear if the same is true if $H$ is real.\\
A first aim of this note is to point out that the index bundle can very well be non-trivial for families of selfadjoint operators on real Hilbert spaces. To explain our approach, let $H$ be a real Hilbert space. The index bundle for families of Fredholm operators of index $0$ belongs to the reduced $KO$-theory group $\widetilde{KO}(\Lambda)$ which is a sub-group of $KO(\Lambda)$. To have a chance to find selfadjoint families with a non-trivial index bundle, we have to consider spaces $\Lambda$ such that $\widetilde{KO}(\Lambda)$ is non-trivial and has elements of order $2$. Suitable spaces that satisfy these assumptions are of the form $\Lambda=X\times S^1$ for an arbitrary compact topological space $X$ as by the product theorem \cite[Cor. 2.4.8]{KTheoryAtiyah}

\[\widetilde{KO}(X\times S^1)\cong\widetilde{KO}(X)\oplus\widetilde{KO}(S^1)\oplus\widetilde{KO}^{-1}(X)\] 
and $\widetilde{KO}(S^1)\cong\mathbb{Z}_2$.\\
Our first theorem below shows that for every separable real Hilbert space $H$ and compact topological space $X$ there is a family $L$ of selfadjoint Fredholm operators on $H$ parametrised by $X\times S^1$ such that $\ind(L)\neq 0\in\widetilde{KO}(X\times S^1)$. Furthermore, we show that if $\widetilde{KO}(X)$ and $\widetilde{KO}^{-1}(X)$ are trivial, then every family of Fredholm operators of index $0$ can either be deformed to a constant family or to our family $L$. \\
Our interest in the above question comes from multiparameter bifurcation theory. The index bundle has been used to construct bifurcation invariants for families of nonlinear operator equations in real Banach spaces for more than three decades (see, e.g., \cite{JacoboK}, \cite{BartschI}, \cite{FPContemp}, \cite{BartschII}, \cite{FPSeveral}, \cite{JacoboTMNAI}, \cite{JacoboTMNAII}, which is far from being exhaustive). Recently, the authors have computed in \cite{RobertNils} and \cite{RobertNilsII} the index bundle for families of discrete dynamical systems to study bifurcation of their solutions.\\
Often the families of equations in bifurcation theory are gradients of functionals, which makes it necessary to deal with selfadjoint operators. This is the case, e.g., when studying bifurcation of homoclinic solutions of Hamiltonian systems. It is the main objective of this paper to point out that the index bundle can be used to study multiparameter bifurcation in this setting. To the best of our knowledge this has not been done before. The only work we are aware of that has considered a related question is \cite{PortaluriNils}, where the operators were complexified to avoid working with index bundles on a real Hilbert space. Finally, we construct an explicit example of Hamiltonian systems having a non-trivial index bundle, which generalises previous work of Pejsachowicz from \cite{Jacobo}.\\ 
The paper is structured as follows. In the next section we outline the construction of the index bundle and recall the Atiyah-J\"anich Theorem that we need later. Our theorem on the non-triviality of the index bundle is stated and proved in Section 3. Finally, Section 4 is devoted to bifurcation theory. Here we firstly obtain an abstract multiparameter bifurcation result for homoclinics of Hamiltonian systems by using an index theorem of Pejsachowicz. Secondly, we construct an explicit example in this setting with a non-trivial index bundle and compare it to an example from \cite{PortaluriNils}.


\section{The Index Bundle}
The aim of this section is to recall the construction of the index bundle and the Atiyah-J\"anich Theorem. There are several references on this subject, which discuss the constructions on various levels of generality (see, e.g., \cite{KTheoryAtiyah}, \cite{Booss}, \cite{Banach Bundles} and \cite{indbundleIch}). Here we mainly follow \cite{FPContemp}, and assume throughout that $H$ is real, separable and of infinite dimension. Let us recall that the space $\Phi(H)$ of all Fredholm operators on $H$ has the path components 

\[\Phi_k(H)=\{T\in\Phi(H):\,\ind(T)=k\}.\]
As we are eventually interested in families of selfadjoint Fredholm operators, we henceforth assume that all our families are in the path component $\Phi_0(H)$.\\
Let $\Lambda$ be a compact topological space and $L:\Lambda\rightarrow\Phi_0(H)$ a family of Fredholm operators of index $0$. It is not difficult to see that there is a finite dimensional subspace $V\subset H$ such that

\begin{align}\label{transversalBundle}
\im(L_\lambda)+V=H,\quad\lambda\in\Lambda.
\end{align}
If $P$ denotes the orthogonal projection onto the orthogonal complement $V^\perp$ of $V$, then we obtain from \eqref{transversalBundle} that the composition

\[H\xrightarrow{L_\lambda} H\xrightarrow{P}V^\perp\]
is surjective for every $\lambda\in\Lambda$. Consequently, the family of kernels of $PL_\lambda$ canonically are a vector bundle $E(L,V)$ over $\Lambda$ whose total space is given by

\[\{(\lambda,u)\in\Lambda\times H:\, L_\lambda u\in V\}.\] 
It is easy to see that the dimension of $E(L,V)$ is the dimension of $V$. Hence, if we denote by $\Theta(V)$ the product bundle with fibre $V$, we obtain a reduced $KO$-theory class

\[\ind(L)=[E(L,V)]-[\Theta(V)]\in\widetilde{KO}(\Lambda).\]
It can be shown that this definiton does not depend on the choice of $V$, so that $\ind(L)$ indeed is a well-defined element of $\widetilde{KO}(\Lambda)$ that only depends on the family $L$. Several properties of this \textit{index bundle} are perfect analogues of the properties of the integral Fredholm index, e.g.,

\begin{itemize}
 \item[(i)] if $L_\lambda$ is invertible for all $\lambda\in\Lambda$, then $\ind(L)=0\in\widetilde{KO}(\Lambda)$.
 \item[(ii)] if $\mathcal{H}:\Lambda\times I\rightarrow\Phi_0(H)$ is a homotopy of Fredholm operators, then \[\ind(\mathcal{H}_0)=\ind(\mathcal{H}_1)\in\widetilde{KO}(\Lambda).\]
 \item[(iii)] if $L,M:\Lambda\rightarrow\Phi_0(H)$ are two families, then $\ind(LM)=\ind(L)+\ind(M)\in\widetilde{KO}(\Lambda)$.
\end{itemize}
Moreover, the index bundle has the following naturality property

\begin{itemize}
\item[(iv)] if $\Lambda'$ is another compact space and $f:\Lambda'\rightarrow\Lambda$, then
 
 \[\ind(f^\ast L)=f^\ast\ind(L)\in\widetilde{KO}(\Lambda'),\]
 where the family $f^\ast L:\Lambda'\rightarrow\Phi_0(H)$ is defined by $(f^\ast L)_\lambda=L_{f(\lambda)}$.
\end{itemize}
The probably most remarkable property of the index bundle is the following \textit{Atiyah-J\"anich Theorem} \cite[App.]{KTheoryAtiyah}.

\begin{theorem}\label{Atiyah-Jaenich}
The map

\[\ind:[\Lambda,\Phi_0(H)]\rightarrow\widetilde{KO}(\Lambda)\]
is a bijection, where $[\Lambda,\Phi_0(H)]$ denotes the homotopy classes of maps $\Lambda\rightarrow\Phi_0(H)$. 
\end{theorem}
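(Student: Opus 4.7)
The plan is to establish bijectivity by treating surjectivity and injectivity separately, with well-definedness on homotopy classes already provided by property (ii) above.

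For surjectivity, I would represent an arbitrary class in $\widetilde{KO}(\Lambda)$ as $[E]-[\Theta(N)]$, where $E\to\Lambda$ has rank $N$; this normal form is available because $\Lambda$ is compact and every vector bundle admits a complement whose sum with it is trivial. The idea is then to fix a Fredholm operator $A\in\mathcal{L}(H)$ of index $-N$ with trivial kernel and cokernel a fixed trivial $N$-dimensional subspace $V_0\subset H$ (for instance, $A$ a right shift of order $N$), embed $E$ as a subbundle of a trivial bundle in $H$, and build a family $L_\lambda=A+K_\lambda$ in which $K_\lambda$ is a finite-rank $\lambda$-dependent perturbation engineered so that $\ker L_\lambda\cong E_\lambda$ while $\coker L_\lambda\cong V_0$. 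Feeding this family together with $V=V_0$ into the definition of $\ind$ would then produce $\ind(L)=[E]-[\Theta(N)]$, realising the given class.

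For injectivity, suppose $L^0,L^1:\Lambda\to\Phi_0(H)$ satisfy $\ind(L^0)=\ind(L^1)$. First I would choose a finite-dimensional $V\subset H$ transversal to both families at once, which is possible by enlarging a $V$ that works for $L^0$ using the construction recalled in Section 2 and then repeating for $L^1$. Equality of index bundles would then force $E(L^0,V)$ and $E(L^1,V)$ to be stably isomorphic, and adding an auxiliary trivial invertible summand -- a move that preserves both the homotopy class of the family and the index bundle -- would let me assume these kernel bundles are actually isomorphic. Using this isomorphism to normalise the finite-dimensional data of the two families reduces the problem to a question about composition with a continuous map $\Lambda\to\GL(H)$. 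The real Kuiper theorem, which asserts that $\GL(H)$ is contractible when $H$ is a separable infinite-dimensional real Hilbert space, would then supply the required homotopy between $L^0$ and $L^1$ in $\Phi_0(H)$.

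The injectivity step is where I expect the main obstacle to lie. Converting a stable isomorphism of finite-dimensional kernel bundles into a genuine path in $\Phi_0(H)$ requires a careful patching argument to align local trivialisations over $\Lambda$, and, more substantially, disposing of the remaining infinite-dimensional discrepancy relies on Kuiper's theorem, which is the deep analytic ingredient that lets $\Phi_0(H)$ serve as a classifying space for $\widetilde{KO}(-)$ in the first place. Surjectivity, by contrast, is essentially book-keeping once a family of the form $L_\lambda=A+K_\lambda$ is written down, and well-definedness is already built into the homotopy invariance recorded in property (ii).
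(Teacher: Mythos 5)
The paper does not prove this statement at all: Theorem \ref{Atiyah-Jaenich} is quoted from the appendix of Atiyah's K-theory book, so there is no internal proof to compare against, and your proposal has to be judged as a reconstruction of the classical Atiyah--J\"anich argument. In outline it is the right reconstruction: surjectivity by exhibiting a family whose kernel bundle is a prescribed $E$ and whose cokernel is a fixed trivial space, and injectivity by stabilising the kernel bundles over a common transversal $V$ and then invoking the real Kuiper theorem (contractibility of $GL(H)$), which is indeed the deep ingredient.

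However, your surjectivity recipe is inconsistent as written. If $A$ has index $-N$ (e.g.\ a right shift of order $N$) and $K_\lambda$ is finite rank, then $\ind(A+K_\lambda)=\ind(A)=-N$, so $L_\lambda=A+K_\lambda$ can never have an $N$-dimensional kernel \emph{and} an $N$-dimensional cokernel, and in particular it never lies in $\Phi_0(H)$, which is where the theorem requires the family to live. The standard repair is the shift construction: embed $E$ in a trivial bundle $\Lambda\times\mathbb{R}^M$ with complement $F$, model $H$ on $\big(\bigoplus_n\mathbb{R}^M\big)\oplus H'$, let $T_\lambda$ act as the left shift on the $\bigoplus_n E_\lambda$-summand and as the identity on the $\bigoplus_n F_\lambda$-summand (so $T_\lambda$ is surjective with kernel $E_\lambda$), and take the direct sum with a fixed injective operator on $H'$ whose cokernel is $N$-dimensional; the resulting family has index $0$, kernel bundle $E$, constant cokernel, hence index bundle $[E]-[\Theta(\mathbb{R}^N)]$. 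On the injectivity side your outline is the standard one, but the sentence ``normalise the finite-dimensional data \ldots reduces the problem to composition with a map $\Lambda\rightarrow GL(H)$'' is precisely where the work lies: one must first homotope each family so that it maps $E(L^i,V)^\perp$ isomorphically onto $V^\perp$ and then use the bundle isomorphism over the finite-dimensional part before Kuiper can be applied; as it stands this step is asserted rather than proved, as you yourself acknowledge.
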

\noindent
Note that $[\Lambda,\Phi_0(H)]$ actually is a semi-group, and (iii) implies that the index map in Theorem \ref{Atiyah-Jaenich} is a semi-group homomorphism.\\
The following lemma is well known, but we include its proof for the convenience of the reader.

\begin{lemma}\label{indexadjoint}
Let $L^\ast$ denote the family of adjoints of the family $L:\Lambda\rightarrow\Phi_0(H)$, i.e. $L^\ast=\{L^\ast_\lambda\}_{\lambda\in I}$. Then

\[\ind(L^\ast)=-\ind(L)\in\widetilde{KO}(\Lambda).\]
\end{lemma}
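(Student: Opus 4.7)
The plan is to derive the identity from the multiplicativity (iii), the homotopy invariance (ii), and the vanishing property (i) on invertible families, through the chain
\[\ind(L)+\ind(L^\ast)\;=\;\ind(LL^\ast)\;=\;0.\]
First I would verify that the pointwise composition $LL^\ast=\{L_\lambda L_\lambda^\ast\}_{\lambda\in\Lambda}$ is a continuous family in $\Phi_0(H)$. Continuity follows from the continuity of $L$, of the adjoint involution, and of operator composition. Each $L_\lambda L_\lambda^\ast$ is Fredholm with integer index $\ind(L_\lambda)+\ind(L_\lambda^\ast)=0$, so the family lies in $\Phi_0(H)$, and property (iii) yields the first equality displayed above.

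The heart of the argument is the vanishing $\ind(LL^\ast)=0$. For this I would introduce the homotopy $\mathcal{H}:\Lambda\times[0,1]\rightarrow\mathcal{L}(H)$ defined by $\mathcal{H}(\lambda,s)=L_\lambda L_\lambda^\ast+sI$. Each $\mathcal{H}(\lambda,s)$ is selfadjoint, and for $s>0$ the estimate
\[\langle\mathcal{H}(\lambda,s)x,x\rangle=\|L_\lambda^\ast x\|^2+s\|x\|^2\geq s\|x\|^2\]
shows that $\mathcal{H}(\lambda,s)$ is bounded below; together with selfadjointness this forces invertibility. Hence $\mathcal{H}$ is a continuous homotopy inside $\Phi_0(H)$ from $\mathcal{H}_0=LL^\ast$ to a family $\mathcal{H}_1$ of invertible operators, and combining (ii) with (i) we obtain $\ind(LL^\ast)=\ind(\mathcal{H}_1)=0$, which was the remaining point.

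The only step demanding some care, and the one I would double-check, is that the homotopy $\mathcal{H}$ actually stays in $\Phi_0(H)$ all the way down to $s=0$, since (ii) and (iii) only apply to families of Fredholm operators of index $0$. This is handled by the two observations above: $LL^\ast=\mathcal{H}_0$ is already Fredholm of index zero by the scalar identity $\ind(T^\ast)=-\ind(T)$, while for $s>0$ invertibility puts $\mathcal{H}_s$ into $\Phi_0(H)$ automatically. Everything else is a direct application of the three structural properties of the index bundle recorded in Section~2.
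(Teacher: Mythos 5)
Your proof is correct and follows essentially the same route as the paper: the paper shows that $L^\ast_\lambda L_\lambda+\varepsilon$ is invertible for $\varepsilon>0$ (via a norm estimate rather than your quadratic-form bound) and then combines properties (i)--(iii) exactly as you do, the only cosmetic differences being that you work with $LL^\ast$ instead of $L^\ast L$ and spell out the homotopy $s\mapsto LL^\ast+sI$ explicitly.
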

\begin{proof}
For $\varepsilon>0$, we have

\[\|(L^\ast_\lambda L_\lambda+\varepsilon) u\|^2=\|L^\ast_\lambda L_\lambda u\|^2+\varepsilon\|L_\lambda u\|^2+\varepsilon^2\|u\|^2\geq\varepsilon^2\|u\|^2,\quad u\in H.\]
Consequently, $L^\ast_\lambda L_\lambda+\varepsilon$ is injective and has closed range. As  $L^\ast_\lambda L_\lambda+\varepsilon$ is also selfadjoint, it is an invertible operator. Now the properties (i)-(iii) from above imply

\[0=\ind(L^\ast L+\varepsilon I_H)=\ind(L^\ast L)=\ind(L^\ast)+\ind(L),\]
which shows the assertion.
\end{proof}
\noindent
Let us now consider the case of families of selfadjoint operators. Note that by Lemma \ref{indexadjoint}, $2\ind(L)=0\in\widetilde{KO}(\Lambda)$, which however only implies that $\ind(L)$ is trivial if $\widetilde{KO}(\Lambda)$ has no 2-torsion.\\
Let us point out that the construction of the index bundle works analogously if $H$ is a complex Hilbert space, in which case $\ind(L)$ is an element of the reduced $K$-theory group $\widetilde{K}(\Lambda)$. If, in this case, $L:\Lambda\rightarrow\Phi_S(H)$ is a family of selfadjoint operators, we can consider the homotopy $\mathcal{H}:\Lambda\times I\rightarrow\Phi_0(H)$ given by $\mathcal{H}_{(\lambda,s)}=L_\lambda+ s\,i\,I_H$. As the spectra of selfadjoint operators are real, we see that $\mathcal{H}_{(\lambda,s)}$ is invertible for $s\neq 0$. Hence it follows from the homotopy invariance in (ii) and the property (i) that $\ind(L)=\ind(\mathcal{H}_1)=0\in\widetilde{K}(\Lambda)$. In other words, the index bundle of a family of selfadjoint operators on a complex Hilbert space vanishes (see \cite[Ex. 3.36]{Booss}).\\
The previous argument obviously does not work if $H$ is a real Hilbert space as the homotopy $\mathcal{H}$ could not be defined without complexifying $H$. A uniform gap in the spectrum close to $0$ would allow a similar argument, but the next section shows that such a gap does not exist in general.  


\section{The Theorem}

\subsection{Statement of the Theorem and a Corollary}
The following theorem shows that the index bundle can be non-trivial for families of selfadjoint Fredholm operators on real Hilbert spaces.

\begin{theorem}\label{main}
Let $X$ be a compact topological space and $H$ a real separable Hilbert space of infinite dimension.

\begin{itemize}
\item[(i)] There exists a family $L:X\times S^1\rightarrow\Phi_S(H)$ such that 

\[\ind(L)\neq 0\in\widetilde{KO}(X\times S^1).\]
\item[(ii)] If $\widetilde{KO}(X)$ and $\widetilde{KO}^{-1}(X)$ are trivial, then every family $M:X\times S^1\rightarrow\Phi_0(H)$ is either homotopic to a constant family or to the family $L$ in $\Phi_S(H)$ from (i).
\end{itemize}
\end{theorem}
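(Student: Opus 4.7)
\emph{Plan.} For (i), I would first reduce to the case $X=\{\mathrm{pt}\}$. Pick a basepoint $x_0 \in X$; then the projection $\pi_2 \colon X\times S^1 \to S^1$ has a section $\theta\mapsto (x_0,\theta)$, so the induced pullback $\pi_2^* \colon \widetilde{KO}(S^1) \to \widetilde{KO}(X\times S^1)$ is split injective. By the naturality property~(iv), it therefore suffices to construct a single loop $L_0 \colon S^1 \to \Phi_S(H)$ with $\ind(L_0)\neq 0 \in \widetilde{KO}(S^1)\cong\mathbb{Z}_2$, for then $L := L_0\circ\pi_2$ inherits a non-trivial index bundle, $\ind(L)=\pi_2^*\ind(L_0)\neq 0$.

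The construction of such an $L_0$ is the substantive part of (i). Two naive strategies are immediately ruled out. First, if $L_0$ is everywhere invertible, then $\ind(L_0)=0$ by property~(i). Second, if $L_0$ is selfadjoint and $\dim\ker(L_0)_\theta$ is constant in $\theta$, then the orthogonal decomposition $H=\im(L_0)_\theta\oplus\ker(L_0)_\theta$ produces a canonical bundle isomorphism $\ker L_0\cong\coker L_0$, forcing $\ind(L_0)=[\ker L_0]-[\coker L_0]=0$. Hence any candidate must have eigenvalues that cross zero along the loop in a topologically essential way, and the invariant must reflect a mod-$2$ parity realising the generator of $\widetilde{KO}(S^1)\cong\mathbb{Z}_2$. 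I would look for a concrete $L_0$ of the form ``fixed invertible selfadjoint operator $A$ on an infinite-dimensional subspace $H_0\subset H$ coupled off-diagonally to a $\theta$-dependent selfadjoint endomorphism of the finite-dimensional complement'', arranged so that for an appropriately chosen $V\subset H$ transverse to $\im(L_0)_\theta$ the bundle $E(L_0,V)$ is stably isomorphic to the M\"obius bundle on $S^1$, and hence represents the nontrivial class in $\widetilde{KO}(S^1)$.

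For~(ii), the argument is formal once~(i) is available. By the hypotheses $\widetilde{KO}(X)=\widetilde{KO}^{-1}(X)=0$ and the product formula quoted in the introduction, $\widetilde{KO}(X\times S^1)\cong \widetilde{KO}(S^1)\cong \mathbb{Z}_2$. Theorem~\ref{Atiyah-Jaenich} then identifies $[X\times S^1,\Phi_0(H)]$ bijectively with $\widetilde{KO}(X\times S^1)$ via the index map, so there are exactly two homotopy classes of families $X\times S^1\to\Phi_0(H)$: the trivial one, represented by any constant family, and the non-trivial one, represented by the selfadjoint family $L$ from~(i). Every $M\colon X\times S^1\to\Phi_0(H)$ therefore falls into exactly one of these classes and is correspondingly homotopic in $\Phi_0(H)$ to either a constant family or to $L$.

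The main obstacle is the construction of $L_0$ in~(i). Selfadjointness forces both the invertible case and the constant-kernel-rank case to give zero index bundle, so the nontrivial loop has to simultaneously feature a genuine jump of the kernel dimension and a non-trivial off-diagonal coupling that survives the passage to $E(L_0,V)$, rather than producing a trivial sub-bundle of $H\times S^1$. Matching this explicit construction to the generator of $\widetilde{KO}(S^1)$ under the Atiyah-Jänich bijection of Theorem~\ref{Atiyah-Jaenich} is where I expect the real work of the proof to lie.
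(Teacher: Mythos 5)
Your reduction to a single loop over $S^1$ and your argument for part (ii) are fine and coincide with the paper's (the paper also extends a loop $\widetilde{L}$ trivially to $X\times S^1$ and detects it by pulling back along the section $z\mapsto (x_0,z)$, and proves (ii) exactly as you do from the product formula and Theorem \ref{Atiyah-Jaenich}). However, the core of part (i) -- the actual construction of a loop in $\Phi_S(H)$ with non-trivial index bundle -- is missing: you state an ansatz and explicitly defer ``the real work''. Worse, the ansatz you indicate cannot succeed. If $H=H_0\oplus W$ with $W$ finite dimensional and $L_\theta$ has a fixed invertible selfadjoint block $A$ on $H_0$, with all $\theta$-dependence confined to the off-diagonal coupling and to a symmetric endomorphism of $W$, then every $L_\theta$ is a finite-rank perturbation of the fixed Fredholm operator $A\oplus 0_W$, and the set of all such block operators is an affine, hence convex, subset of $\Phi_0(H)$. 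The straight-line homotopy to the constant family $A\oplus 0_W$ therefore stays in $\Phi_0(H)$, and homotopy invariance forces $\ind(L_0)=0$. The same argument shows more generally that no family consisting of compact perturbations of one fixed operator can have a non-trivial index bundle, since such an affine slice of $\Phi_0(H)$ is contractible; so the $\theta$-dependence must genuinely move the class of $L_\theta$ in the Calkin algebra, which your scheme does not do.

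This is precisely where the paper's construction earns its keep. It takes the segment $\widetilde{L}^1_\Theta=P_+-P_-+\Theta P_0$, $\Theta\in[-1,1]$ (one simple eigenvalue crossing zero), and closes it up inside $\Phi_S(H)\cap GL(H)$ by the path $\widetilde{L}^2_\lambda=W_\lambda^\ast\widetilde{L}^1_{-1}W_\lambda$, where $W_\lambda$ joins $I_H$ to an invertible operator $N$ built from shift operators satisfying $N^\ast\widetilde{L}^1_{-1}N=\widetilde{L}^1_1$; this congruence between $P_++P_0-P_-$ and $P_+-P_0-P_-$ is possible only in infinite dimensions (in finite dimensions Sylvester's law of inertia forbids it) and is exactly the step your sketch has no substitute for. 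Non-triviality is then detected not by matching the construction against the Atiyah--J\"anich bijection directly, but by the Fitzpatrick--Pejsachowicz parity: with the explicit parametrix $P_+-P_-+P_0$ one computes $\sigma(\widetilde{L}^1,[-1,1])=-1$ via the Leray--Schauder degree, the closing path of invertibles contributes trivially, and the identity $\sigma(\cdot,S^1)=w_1(\ind(\cdot))$ yields $w_1(\ind(\widetilde{L}))\neq 0$. Without an argument of this kind (or some equivalent computation of the generator of $\widetilde{KO}(S^1)$ by a concrete selfadjoint loop), your proposal does not establish (i), and (ii) collapses with it since it quotes (i).
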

\noindent
Note that the assumptions of (ii) hold in particular if $X$ is contractible, and for $X=S^n$ if $n\equiv 5\,\text{mod}\, 8$ or $n\equiv 6\,\text{mod}\,8$. The special case that $X$ is a point is worth to be written down as a corollary.

\begin{cor}
Every loop in $\Phi_0(H)$ is homotopic to a loop in $\Phi_S(H)$.
\end{cor}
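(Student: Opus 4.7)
The plan is to apply Theorem~\ref{main}(ii) with $X$ equal to a single point. Since a point is contractible, both $\widetilde{KO}(X)$ and $\widetilde{KO}^{-1}(X)$ vanish, so the hypotheses of (ii) are satisfied. With $X$ a point, $X\times S^1$ is just $S^1$, so a family $M:X\times S^1\rightarrow\Phi_0(H)$ is precisely a loop in $\Phi_0(H)$, and (ii) tells us that $M$ is homotopic in $\Phi_0(H)$ either to a constant loop or to the selfadjoint loop $L$ produced in part~(i).

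In the second case there is nothing to do, as $L$ already takes values in $\Phi_S(H)$. In the first case, $M$ is homotopic to a constant loop $\lambda\mapsto T$ for some $T\in\Phi_0(H)$. To conclude I would show that this constant loop can be further deformed into $\Phi_S(H)$. The identity $I_H\in\Phi_S(H)$ lies in $\Phi_0(H)$, and since $\Phi_0(H)$ is path-connected (as recalled in Section~2), there exists a path from $T$ to $I_H$ in $\Phi_0(H)$; interpreting this path as a homotopy between the constant loops at $T$ and at $I_H$ gives the desired deformation of $M$ to a loop in $\Phi_S(H)$, namely the constant loop at $I_H$.

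There is no real obstacle here; the content is entirely packaged in Theorem~\ref{main}(ii), and the only small point to watch is to verify that the two alternatives of (ii) both end up inside $\Phi_S(H)$, which is handled for the constant case by the path-connectedness of $\Phi_0(H)$ together with the fact that $\Phi_S(H)\cap\Phi_0(H)$ is non-empty.
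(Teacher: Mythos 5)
Your proposal is correct and follows the paper's own route: the corollary is exactly Theorem \ref{main}(ii) specialised to $X$ a point. The extra deformation via path-connectedness of $\Phi_0(H)$ is harmless but not needed, since the proof of (ii) already identifies the constant family as the one at $I_H$, which is selfadjoint.
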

\noindent
We now prove Theorem \ref{main} in the next section.

\subsection{Proof of Theorem \ref{main}}
We split the proof into several steps.

\subsubsection*{Step 1: Preliminaries} 
Before we construct the family $L$, we need to recall various preliminaries.\\
The \textit{parity} is a $\mathbb{Z}_2$-valued homotopy invariant for paths in $\Phi_0(H)$ that was constructed by Fitzpatrick and Pejsachowicz (see, e.g., \cite{FPContemp}). To briefly recap its construction, let us firstly recall that a path $M:[a,b]\rightarrow GL(H)$ is called a parametrix for $L:[a,b]\rightarrow\Phi_0(H)$ if $M_\Theta L_\Theta=I_H+K_\Theta$, $\Theta\in[a,b]$, for compact operators $K_\Theta$. By using the Bartle-Graves Theorem, it can be shown that parametrices always exist (see \cite[Thm. 2.1]{FPContemp}). Secondly, the Leray-Schauder degree $\deg_{LS}(f,\Omega)$ is a mapping degree for continuous maps $f:\overline{\Omega}\rightarrow E$ where $\Omega$ is a bounded domain in a real Banach space $E$ and $f=I_E-C$ is a perturbation of the identity by a (generally nonlinear) compact map such that $f(u)\neq 0$ for all $u\in\partial\Omega$. When applied to a linear isomorphism $f=I_E-K$, where $K$ is a compact operator, it can be shown that

\begin{align}\label{degLSlin}
\deg_{LS}(I_E-K)=(-1)^m,
\end{align}  
where 

\[m=\sum_{\lambda<0}{m(\lambda)}\]
and $m(\lambda)$ denotes the algebraic multiplicity of $\lambda$ as an eigenvalue of $I_E-K$. Finally, the parity of a path $L:[a,b]\rightarrow\Phi_0(H)$ having invertible endpoints is given by

\begin{align}\label{parity}
\sigma(L,[a,b])=\deg_{LS}(M_bL_b)\,\deg_{LS}(M_aL_a)\in\mathbb{Z}_2=\{\pm 1\},
\end{align}
where $M:[a,b]\rightarrow GL(H)$ is a parametrix for $L$.\\
The parity is invariant under homotopies in $\Phi_0(H)$ that keep the endpoints invertible. Moreover,

\begin{itemize}
 \item[(C)] if $L_c$ is invertible for some $a<c<b$, then
  \[\sigma(L,[a,b])=\sigma(L,[a,c])\,\sigma(L,[c,b]).\]
  \item[(N)] if $L_\Theta$ is invertible for all $\Theta\in[a,b]$, then $\sigma(L,[a,b])=1$. 
\end{itemize}
For closed paths $L:S^1\rightarrow\Phi_0(H)$ the parity can be defined as in \eqref{parity} as long as there is some $z_0\in S^1$ for which $L_{z_0}$ is invertible. The latter assumption can be lifted by considering $S^1$ as obtained from an interval $[a,b]$ by identifying $a$ and $b$ \cite{FPSeveral}. Then there exists a parametrix $M:[a,b]\rightarrow GL(H)$ and the parity of the closed path $L$ is defined by

\[\sigma(L,S^1)=\deg_{LS}(M_aM^{-1}_b).\] 
It can be shown that this definition does not depend on the choice of the interval $[a,b]$, and that $\sigma(L,S^1)=\sigma(L,[a,b])$ in case that $L_a=L_b$ is invertible. Moreover, the following property is a fundamental relation between the parity and the index bundle (see \cite[Prop. 2.7]{FPSeveral}):

\begin{align}\label{w1indbundle}
\sigma(L,S^1)=w_1(\ind(L))\in\mathbb{Z}_2,
\end{align}
where $w_1:\widetilde{KO}(S^1)\rightarrow\mathbb{Z}_2$ is the isomorphism induced by the first Stiefel-Whitney class. Let us recall that the Stiefel-Whitney classes are maps $w_k:\widetilde{KO}(\Lambda)\rightarrow H^k(\Lambda;\mathbb{Z}_2)$, $k\in\mathbb{N}$, for any compact topological space $\Lambda$, such that

\begin{align}\label{StiefelWhitney}
f^\ast w_k([E]-[F])=w_k(f^\ast([E]-[F]))\in H^k(\widetilde{\Lambda};\mathbb{Z}_2)
\end{align}
for any compact topological space $\widetilde{\Lambda}$ and any continuous map $f:\widetilde{\Lambda}\rightarrow\Lambda$. Note that we identify in \eqref{w1indbundle} the cohomology group $H^1(S^1;\mathbb{Z}_2)$ and $\mathbb{Z}_2$.

\subsubsection*{Step 2: The path $\widetilde{L}^1$}
Let $\{e_k\}_{k\in\mathbb{Z}}$ be a complete orthonormal system of $H$. We denote by $P_0$ the orthogonal projection onto the span of $e_0$ and by $P_\pm$ the orthogonal projections onto the closures of the spans of $\{e_k\}_{\pm k\in\mathbb{N}}$, respectively. Consider the paths of operators

\[\widetilde{L}^1_\Theta=P_+-P_-+\Theta\, P_0,\quad \Theta\in[-1,1],\]
and the constant path $M_\Theta=P_+-P_-+P_0\in GL(H)$. Then

\[M_\Theta\widetilde{L}^1_\Theta=I_H-(1-\Theta)P_0,\quad\Theta\in[-1,1],\]
and thus $M$ is a parametrix for $\widetilde{L}^1$. Note that by \eqref{degLSlin}, $\deg_{LS}(M\widetilde{L}^1_1)=\deg_{LS}(I_H)=1$ and $\deg_{LS}(M\widetilde{L}^1_{-1})=\deg_{LS}(I_H-2P_0)=-1$, which means that $\sigma(\widetilde{L}^1,[-1,1])=-1$.

\subsubsection*{Step 3: The path $\widetilde{L}^2$}
We claim that there is a path $\widetilde{L}^2$ in $\Phi_S(H)\cap GL(H)$ such that $\widetilde{L}^2_0=\widetilde{L}^1_1$ and $\widetilde{L}^2_1=\widetilde{L}^1_{-1}$. Note that $\widetilde{L}^1_1=P_++P_0-P_-$ and $\widetilde{L}^1_{-1}=P_+-P_--P_0$. Let $A_+:\im(P_++P_0)\rightarrow\im(P_+)$ be the right shift, and $A_-:\im(P_-)\rightarrow\im(P_-+P_0)$ the left shift. These are orthogonal operators. We set

\[N:=A_+(P_++P_0)-A_-P_-\in GL(H)\]
and obtain for $u,v\in H$

\begin{align*}
\langle N^\ast\widetilde{L}^1_{-1}Nu,v\rangle&=\langle \widetilde{L}^1_{-1}Nu,Nv\rangle\\
&=\langle A_+(P_++P_0)u,A_+(P_++P_0)v\rangle-\langle A_-P_-u,A_-P_-v\rangle\\
&=\langle (P_++P_0)u,(P_++P_0)v\rangle-\langle P_-u,P_-v\rangle\\
&=\langle(P_++P_0-P_-)u,v\rangle=\langle \widetilde{L}^1_{1}u,v\rangle.
\end{align*}
Consequently, $N^\ast\widetilde{L}^1_{-1}N=\widetilde{L}^1_{1}$. As $GL(H)$ is path-connected, there is a path $\{W_\lambda\}_{\lambda\in I}$ in $GL(H)$ connecting $N$ to the identity $I_H$. Now $\widetilde{L}^2_\lambda:=W^\ast_\lambda \widetilde{L}^1_{-1} W_\lambda$, $\lambda\in I$, is a path in $\Phi_S(H)\cap GL(H)$ that connects the endpoints of $\widetilde{L}^1$.

\subsubsection*{Step 4: Construction of the family $L$}
It readily follows from (C) and (N) that the concatenation of $\widetilde{L}^1$ and $\widetilde{L}^2$ is a closed path $\widetilde{L}:S^1\rightarrow\Phi_S(H)$ such that $\sigma(\widetilde{L},S^1)=-1$.\\
We now define the family $L:X\times S^1\rightarrow\Phi_S(H)$ as the trivial extension $L_{(x,z)}=\widetilde{L}_z$, $(x,z)\in X\times S^1$. Let $\iota:S^1\rightarrow X\times S^1$ be the map $\iota(z)=(x_0,z)$ for some $x_0\in X$. Then $L\circ\iota=\widetilde{L}$ and we get from the naturality (iv) of the index bundle that

\[\iota^\ast\ind(L)=\ind(\iota^\ast L)=\ind(\widetilde{L})\in\widetilde{KO}(S^1).\]
Hence it follows from the naturality of the first Stiefel-Whitney class that

\[\iota^\ast(w_1(\ind(L)))=w_1(\iota^\ast\ind(L))=w_1(\ind(\widetilde{L}))=\sigma(\widetilde{L},S^1),\]
where we have used \eqref{w1indbundle} in the last equality. Finally, as $\sigma(\widetilde{L},S^1)=-1$, this implies that $w_1(\ind(L))\neq 1\in H^1(X\times S^1;\mathbb{Z}_2)$ and so $\ind(L)$ is non-trivial as claimed.

\subsubsection*{Step 5: Proof of (ii)}
Let us first recall from the introduction that

\[\widetilde{KO}(X\times S^1)\cong\widetilde{KO}(X)\oplus\widetilde{KO}(S^1)\oplus\widetilde{KO}^{-1}(X),\]
which yields $\widetilde{KO}(X\times S^1)\cong\widetilde{KO}(S^1)\cong\mathbb{Z}_2$ by the assumptions of (ii). The Atiyah-J\"anich Theorem \ref{Atiyah-Jaenich} now implies that

\[\ind:[X\times S^1,\Phi_0(H)]\rightarrow\widetilde{KO}(X\times S^1)\cong\mathbb{Z}_2\]
is a bijection. Hence there are only two homotopy classes and one of them contains the constant family given by the identity $I_H$. As the family $L$ from (i) has a non-trivial index bundle, it cannot be homotopic to a constant family. Thus any given family in $\Phi_0(H)$ is either homotopic to the constant family $I_H$ or to $L$, which shows the assertion.


\section{Multiparameter Bifurcation for Homoclinic Solutions of Hamiltonian Systems}

\subsection{The Fitzpatrick-Pejsachowicz Bifurcation Theorem and its Limits}
Let $X,Y$ be Banach spaces and $\Lambda$ a compact connected CW-complex. Let $F:\Lambda\times X\rightarrow Y$ be a continuous family of $C^1$-Fredholm maps such that $F(\lambda,0)=0$ for all $\lambda\in\Lambda$. We call $\lambda^\ast\in\Lambda$ a \textit{bifurcation point} if in every neighbourhood of $(\lambda^\ast,0)\in\Lambda\times X$ there is some $(\lambda,u)$ such that $F(\lambda,u)=0$ and $u\neq 0$. If we denote by $L_\lambda:=D_0F_\lambda$ the Fr\'echet derivative of $F_\lambda$ at $0\in X$, then it clearly follows from the implicit function theorem that $L_{\lambda^\ast}$ is not invertible if $\lambda^\ast$ is a bifurcation point. On the other hand, it is readily seen that the non-invertibility of $L_{\lambda^\ast}$ is in general not sufficient for the existence of bifurcation points.\\
We have recalled in the previous section that a non-trivial index bundle implies the existence of some $\lambda^\ast\in\Lambda$ such that $L_{\lambda^\ast}$ is not invertible. Pejsachowicz and Fitzpatrick pointed out the importance of the index bundle for bifurcation theory in a series of papers (cf. e.g. \cite{JacoboK}, \cite{FPContemp}, \cite{JacoboTMNAI}). Here we use the main theorems of \cite{JacoboTMNAI} and \cite{FPSeveral} (see also \cite{JacoboTMNAII}) in a slightly modified version from \cite{NilsBif}. In what follows, we denote by $w_k$, $k\in\mathbb{N}$, the Stiefel-Whitney classes, which map $\widetilde{KO}(\Lambda)$ to $H^k(\Lambda;\mathbb{Z}_2)$. Further, $B(F)\subset\Lambda$ is the set of all bifurcation points in $\Lambda$.

\begin{theorem}\label{PejsachowiczI}
If there is some $\lambda_0\in\Lambda$ for which $L_{\lambda_0}$ is invertible and $w_k(\ind L)\neq 0\in H^k(\Lambda;\mathbb{Z}_2)$ for some $k\in\mathbb{N}$, then $B(F)\neq\emptyset$. Moreover, if $\Lambda$ is a topological manifold of dimension $m\geq 2$ and $1\leq k\leq m-1$, then the dimension of $B(F)$ is at least $m-k$ and the set $B(F)$ is not contractible to a point.
\end{theorem}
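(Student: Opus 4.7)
The plan is to reduce Theorem~\ref{PejsachowiczI} to the following \emph{local vanishing principle}, which is the essential content of the multiparameter bifurcation theorems of Fitzpatrick and Pejsachowicz: for every compact $K\subset \Lambda$ on which $F$ has no bifurcation points, $\ind L|_K = 0\in \widetilde{KO}(K)$. Granting this, the existence statement is immediate: if $B(F)=\emptyset$, taking $K=\Lambda$ gives $\ind L = 0$, whence $w_k(\ind L)=0$ for every $k$, contradicting the hypothesis.

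For the dimensional estimate, I would argue by contradiction and assume that $\dim B(F) < m-k$. Since $\Lambda$ is a compact connected CW-complex, one can enclose $B(F)$ after a suitable subdivision in a closed subcomplex $A\subset \Lambda$ of dimension at most $m-k-1$. The local vanishing principle, applied to a compact exhaustion of the open set $\Lambda\setminus A$, combined with naturality of $w_k$ and continuity of \v{C}ech cohomology, shows that $w_k(\ind L)$ restricts to zero on $\Lambda\setminus A$. The long exact sequence of the pair $(\Lambda,\Lambda\setminus A)$ then lifts $w_k(\ind L)$ to a class in $H^k(\Lambda,\Lambda\setminus A;\mathbb{Z}_2)$. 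Since $\Lambda$ is a topological $m$-manifold, Poincar\'e--Lefschetz duality with $\mathbb{Z}_2$-coefficients (no orientability required) identifies this group with the Borel--Moore homology $H^{BM}_{m-k}(A;\mathbb{Z}_2)$, which vanishes for dimensional reasons since $\dim A\leq m-k-1$. Hence $w_k(\ind L)=0$, contradicting the hypothesis. The non-contractibility of $B(F)$ is obtained from a refinement of the same cohomological argument: a contraction to a point would confine the relevant class to a set of too small cohomological dimension and again force $w_k(\ind L)=0$.

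The main obstacle is the local vanishing principle itself. To prove it, compactness of $K$ together with the absence of bifurcation yields a uniform radius $\rho>0$ with $F(\lambda,u)\neq 0$ for every $(\lambda,u)\in K\times(\overline{B}_\rho\setminus\{0\})$. A parameterised Lyapunov--Schmidt reduction on small charts covering $K$ then produces a family of finite-dimensional maps for which $0$ is an isolated zero, whose Brouwer indices are locally constant in $\lambda$. Patching this finite-dimensional data along $K$, and using the $C^1$-smoothness of $F$ to compare $F_\lambda$ with its linearisation $L_\lambda$ via the straight-line homotopy near $0$, one can homotope $L|_K$ inside $\Phi_0(H)$ to a family of invertible operators. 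Property~(i) of the index bundle then gives $\ind L|_K = 0$. This is the deep step and forms the technical heart of the bifurcation theorems of Pejsachowicz cited at the beginning of Section~4.
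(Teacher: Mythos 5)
First, note that the paper does not prove Theorem \ref{PejsachowiczI} at all: it is quoted from the bifurcation theorems of Fitzpatrick and Pejsachowicz \cite{FPSeveral}, \cite{JacoboTMNAI}, \cite{JacoboTMNAII}, in the modified form of \cite{NilsBif}. So your proposal has to stand on its own, and it does not, because the ``local vanishing principle'' on which you base everything is false as stated. Your principle makes no use of the hypothesis that $L_{\lambda_0}$ is invertible for some $\lambda_0$; if it were true, taking $K=\Lambda$ would prove the existence statement without that hypothesis, whereas the paper explicitly recalls (Pejsachowicz, \cite[Rem. 1.2.1]{JacoboTMNAI}) that this hypothesis cannot be removed. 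A concrete counterexample: let $E\subset\Theta(\mathbb{R}^2)$ be the M\"obius line bundle over $S^1$, write $H=\mathbb{R}^2\oplus H'$, let $L_z$ be the orthogonal projection of $\mathbb{R}^2$ onto $E_z^\perp$ direct sum the identity on $H'$, and set $F_z(u)=L_zu+\|P_{E_z}u\|^2P_{E_z}u$, where $P_{E_z}$ is the projection onto $E_z$. Since $L_zu\perp E_z$, the equation $F_z(u)=0$ forces $P_{E_z}u=0$ and then $u=0$, so $u\equiv 0$ is the only solution for every $z$ and $B(F)=\emptyset$; nevertheless $\ind(L)=[E]-[\Theta(\mathbb{R})]\neq 0\in\widetilde{KO}(S^1)$ and $w_1(\ind L)\neq 0$. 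Thus ``no bifurcation on a compact $K$'' does not imply $\ind L|_K=0$, and both your existence argument and your dimension estimate, which are built on this principle, collapse.

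The same example shows where your proposed proof of the principle breaks: from the absence of zeros of $F_\lambda$ near $0$ you get local constancy of the Leray--Schauder/Brouwer index of the nonlinear germ, but when $L_\lambda$ is not invertible this index carries no information about the homotopy class of the linearisation, so you cannot ``homotope $L|_K$ inside $\Phi_0(H)$ to a family of invertible operators'' (in the example the local index data is constant while $L$ is nowhere invertible and has non-trivial index bundle). The actual proofs anchor everything at the invertible operator $L_{\lambda_0}$: for a path in $\Lambda$ with invertible endpoints, a parity $-1$ forces a degree jump of the nonlinear maps and hence bifurcation along the path; running this over paths and loops through $\lambda_0$ avoiding $B(F)$ one deduces that the relevant characteristic classes of $\ind(L)$ die on the complement of $B(F)$, and only then do \v{C}ech-cohomological and dimension-theoretic arguments of the kind you sketch in your second paragraph (which is the part of your proposal closest in spirit to \cite{FPSeveral}, \cite{JacoboTMNAII}) yield the lower bound on $\dim B(F)$ and the non-contractibility. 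If you want a self-contained treatment, the place to invest is the one-parameter theorem via the parity and the degree jump, not a vanishing principle for $\ind L$ on bifurcation-free sets.
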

\noindent
Here we refer by dimension of the set $B(F)$ to the Lebesgue covering dimension. Note that Pejsachowicz showed in \cite[Rem. 1.2.1]{JacoboTMNAI} that the existence of some $\lambda_0\in\Lambda$ for which $L_{\lambda_0}$ is invertible cannot be lifted.\\
It is a pretty common setting in applications of bifurcation theory that $X=Y$ is a Hilbert space $H$ and the maps $F_\lambda:H\rightarrow H$ are gradients of functionals, i.e., there is a family of $C^2$ functionals $f:\Lambda\times H\rightarrow\mathbb{R}$ such that $F_\lambda=\nabla f_\lambda$ for all $\lambda\in\Lambda$. In this case, $L_\lambda:=D_0F_\lambda$ is a selfadjoint operator. Consequently, $\ind(L)\in\widetilde{KO}(\Lambda)$ is the index bundle of a family of selfadjoint Fredholm operators. It is the main aim of this paper to stress out that Theorem \ref{PejsachowiczI} can be applied to find bifurcation points for important equations that are gradients of functionals.

\subsection{Bifurcation of Homoclinic Solutions of Hamiltonian Systems}
Let $\Lambda$ be a connected closed smooth manifold, and $\mathcal{H}:\Lambda\times\mathbb{R}\times\mathbb{R}^{2n}\rightarrow\mathbb{R}$ a smooth map. We consider the family of Hamiltonian systems

\begin{equation}\label{Hamiltoniannonlin}
\left\{
\begin{aligned}
Ju'(t)+\nabla_u \mathcal{H}_\lambda(t,u(t))&=0,\quad t\in\mathbb{R}\\
\lim_{t\rightarrow\pm\infty}u(t)&=0,
\end{aligned}
\right.
\end{equation}
where $\lambda\in\Lambda$ and

\begin{align}\label{J}
J=\begin{pmatrix}
0&-I_n\\
I_n&0
\end{pmatrix}
\end{align}
is the standard symplectic matrix. In what follows, we assume that $\mathcal{H}$ is of the form

\begin{align}\label{Hgrowth}
\mathcal{H}_\lambda(t,u)=\frac{1}{2}\langle A(\lambda,t)u,u\rangle+G(\lambda,t,u),
\end{align}
where $A:\Lambda\times\mathbb{R}\rightarrow\mathcal{L}(\mathbb{R}^{2n})$ is a family of symmetric matrices, $G(\lambda,t,u)$ vanishes up to second order at $u=0$, and there are $p>0$, $C\geq 0$ and $g\in H^1(\mathbb{R},\mathbb{R})$ such that

\[|D^2_uG(\lambda,t,u)|\leq g(t)+C|u|^p.\]
Moreover, we suppose that $A_\lambda:=A(\lambda,\cdot):\mathbb{R}\rightarrow\mathcal{L}(\mathbb{R}^{2n})$ converges uniformly in $\lambda$ to families 

\begin{align}\label{limits}
A_\lambda(+\infty):=\lim_{t\rightarrow\infty}A_\lambda(t),\quad A_\lambda(-\infty):=\lim_{t\rightarrow-\infty}A_\lambda(t),\quad\lambda\in \Lambda,
\end{align}
and that the matrices $JA_\lambda(\pm\infty)$ are hyperbolic, i.e. they have no eigenvalues on the imaginary axis. Note that by \eqref{Hgrowth}, $\nabla_u \mathcal{H}_\lambda(t,0)=0$ for all $(\lambda,t)\in \Lambda\times\mathbb{R}$, so that $u\equiv 0$ is a solution of \eqref{Hamiltoniannonlin} for all $\lambda\in\Lambda$.\\
Let us now briefly recall the variational formulation of the equations \eqref{Hamiltoniannonlin} from \cite[\S 4]{Jacobo}. The bilinear form $b(u,v)=\langle J u',v\rangle_{L^2(\mathbb{R},\mathbb{R}^{2n})}$, $u,v\in H^1(\mathbb{R},\mathbb{R}^{2n})$, extends to a bounded form on the well known fractional Sobolev space $H^\frac{1}{2}(\mathbb{R},\mathbb{R}^{2n})$, which can be described in terms of Fourier transforms (cf. eg. \cite[\S 10]{Stuart}). Under the assumption \eqref{Hgrowth}, the map $f:\Lambda\times H^\frac{1}{2}(\mathbb{R},\mathbb{R}^{2n})\rightarrow\mathbb{R}$ given by

\[f_\lambda:H^\frac{1}{2}(\mathbb{R},\mathbb{R}^{2n})\rightarrow\mathbb{R},\quad f_\lambda(u)=\frac{1}{2}b(u,u)+\frac{1}{2}\int^\infty_{-\infty}{\langle A(\lambda,t)u(t),u(t)\rangle\,dt}+\int^\infty_{-\infty}{G(\lambda,t,u(t))\,dt}\]
is $C^2$. Moreover, its critical points are the (classical) solutions of \eqref{Hamiltoniannonlin}. Finally,
the second derivative of $f_\lambda$ at the critical point $0\in H^\frac{1}{2}(\mathbb{R},\mathbb{R}^{2n})$ is given by 

\begin{align}\label{L}
D^2_0f_\lambda(u,v)=b(u,v)+\int^\infty_{-\infty}{\langle A(\lambda,t)u(t),v(t)\rangle\,dt}
\end{align}
and, by using the hyperbolicity of $J A_\lambda(\pm\infty)$, it can be shown that the corresponding Riesz representations $L_\lambda:H^\frac{1}{2}(\mathbb{R},\mathbb{R}^{2n})\rightarrow H^\frac{1}{2}(\mathbb{R},\mathbb{R}^{2n})$ are Fredholm. Consequently, the operators $L_\lambda$ are selfadjoint Fredholm operators, and it follows by elliptic regularity that the kernel of $L_\lambda$ consists of the classical solutions of the linear differential equation

\begin{equation}\label{Hamiltonianlin}
\left\{
\begin{aligned}
Ju'(t)+A(\lambda,t)u(t)&=0,\quad t\in\mathbb{R}\\
\lim_{t\rightarrow\pm\infty}u(t)&=0.
\end{aligned}
\right.
\end{equation}
The stable and the unstable subspaces of \eqref{Hamiltonianlin} are

\begin{align*}
E^s(\lambda,0)&=\{u(0)\in\mathbb{R}^2:\,Ju'(t)+A(\lambda,t)u(t)=0,\, t\in\mathbb{R}; u(t)\rightarrow 0, t\rightarrow\infty\},\\ 
E^u(\lambda,0)&=\{u(0)\in\mathbb{R}^2:\,Ju'(t)+A(\lambda,t)u(t)=0,\, t\in\mathbb{R}; u(t)\rightarrow 0, t\rightarrow-\infty\},
\end{align*}
and it is clear that \eqref{Hamiltonianlin} has a non-trivial solution if and only if $E^s(\lambda,0)$ and $E^u(\lambda,0)$ intersect non-trivially. If we consider the systems \eqref{Hamiltoniannonlin} for the limits \eqref{limits}, i.e.,

\begin{equation}\label{Hamiltonianlinlim}
\left\{
\begin{aligned}
Ju'(t)+A(\lambda,\pm\infty)u(t)&=0,\quad t\in\mathbb{R}\\
\lim_{t\rightarrow\pm\infty}u(t)&=0,
\end{aligned}
\right.
\end{equation}
then the corresponding stable and unstable spaces are given by the generalised eigenvectors of $JA(\lambda,\pm\infty)$ with respect to eigenvalues having negative or positive real parts, respectively. These spaces form vector bundles $E^s(\pm\infty)$ and $E^u(\pm\infty)$ over $\Lambda$ such that

\[E^u(+\infty)\oplus E^s(+\infty)\cong E^u(-\infty)\oplus E^s(-\infty)\cong\Theta(\mathbb{R}^{2n}),\] 
and it follows from Pejsachowicz' index formula \cite[Prop. 5.2]{Jacobohomoclinicsfamily} that

\begin{align}\label{ind}
\ind(L)=[E^s(+\infty)]-[E^s(-\infty)]\in\widetilde{KO}(\Lambda).
\end{align}
\noindent
If $B\subset\Lambda$ denotes the set of all bifurcation points of \eqref{Hamiltoniannonlin}, then we obtain from Theorem \ref{PejsachowiczI}  the following result.

\begin{theorem}\label{thm:Stiefel-Whitney}
If there is some $\lambda_0\in\Lambda$ for which \eqref{Hamiltonianlin} only has the trivial solution, and some $k\in\mathbb{N}$ such that

\begin{align}\label{Stiefel-Whitney}
w_k(E^s(+\infty))\neq w_k(E^s(-\infty)),
\end{align}
then $B\neq\emptyset$. If $m=\dim(\Lambda)\geq 2$ and $1\leq k\leq m-1$, then the covering dimension of $B$ is at least $m-k$ and $B$ is not a contractible topological space.
\end{theorem}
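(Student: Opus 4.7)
The plan is to deduce the theorem directly from Theorem \ref{PejsachowiczI}, applied to the gradient family $F_\lambda = \nabla f_\lambda$ on the Hilbert space $H^{\frac{1}{2}}(\mathbb{R},\mathbb{R}^{2n})$. The two hypotheses of that theorem require an invertible base point and a nonvanishing Stiefel-Whitney class of the index bundle, and both must be translated from the assumptions given here.

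First I would verify that $L_{\lambda_0}$ is invertible. Since each $L_\lambda$ is selfadjoint and Fredholm of index $0$, invertibility is equivalent to injectivity, and by elliptic regularity the kernel of $L_{\lambda_0}$ agrees with the space of classical solutions of \eqref{Hamiltonianlin} at $\lambda_0$, which is trivial by assumption. Note also that the bifurcation points of the nonlinear problem \eqref{Hamiltoniannonlin} coincide with the bifurcation points of $F$, so the conclusion about $B(F)$ from Theorem \ref{PejsachowiczI} will translate to the desired statement about $B$.

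The main step is to pass from the hypothesis $w_k(E^s(+\infty)) \neq w_k(E^s(-\infty))$ to $w_{k'}(\ind L) \neq 0$ for some $k' \in \mathbb{N}$. For this I would invoke Pejsachowicz' index formula \eqref{ind}, together with the fact that the total Stiefel-Whitney class $w = 1 + w_1 + w_2 + \cdots$ is multiplicative on direct sums in view of \eqref{StiefelWhitney} and the Whitney product formula, and hence descends to a multiplicative map $\widetilde{KO}(\Lambda) \to H^\ast(\Lambda;\mathbb{Z}_2)$ satisfying $w([E] - [F]) = w(E)\, w(F)^{-1}$. Choosing the \emph{minimal} $k$ with $w_k(E^s(+\infty)) \neq w_k(E^s(-\infty))$, all lower classes agree and a direct computation in the cohomology ring gives
\[
w_{k}(\ind L) \;=\; w_k(E^s(+\infty)) + w_k(E^s(-\infty)) \;\neq\; 0 \in H^{k}(\Lambda;\mathbb{Z}_2),
\]
since the contributions from the series inverse of $w(E^s(-\infty))$ only affect degrees strictly greater than $k$. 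This $k$ satisfies $1 \leq k \leq m-1$ whenever the hypothesis of the second part does, because $E^s(\pm\infty)$ have rank at most $2n$ and are subbundles of a trivial bundle.

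With both hypotheses of Theorem \ref{PejsachowiczI} in force, its conclusions directly yield $B(F) \neq \emptyset$ in general and, in the manifold case, the covering dimension bound $\dim B(F) \geq m-k$ together with non-contractibility. The main obstacle is the algebraic step in the previous paragraph; it is purely formal, but one must be careful that the contributions of the multiplicative inverse of $w(E^s(-\infty))$ cannot cancel the leading term, which is exactly what the minimality of $k$ ensures.
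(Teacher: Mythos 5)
Your proposal is correct and follows essentially the same route as the paper, which obtains the theorem directly from Theorem~\ref{PejsachowiczI} via Pejsachowicz' index formula \eqref{ind}, translating the triviality of the solution space of \eqref{Hamiltonianlin} at $\lambda_0$ into invertibility of the selfadjoint index-zero operator $L_{\lambda_0}$. Your minimal-$k$ computation with the total Stiefel--Whitney class, showing $w_k(\ind L)=w_k(E^s(+\infty))+w_k(E^s(-\infty))\neq 0$ (and hence the bound $m-k'\geq m-k$), is exactly the detail the paper leaves implicit, and it is carried out correctly.
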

\noindent
By Theorem \ref{main}, \eqref{ind} has a chance to be non-trivial, but it is now important to ensure that \eqref{Stiefel-Whitney} can indeed occur in the setting considered in this section, which is our next aim.

\subsection{An Example}
Let $X$ be a connected closed smooth manifold of dimension $m$. We consider for $\Lambda=X\times S^1$ the family \eqref{Hamiltoniannonlin} and assume that there is a $p_0\in X$ such that

\begin{align*}
A_{(p_0,z)}(+\infty)=\lim_{t\rightarrow+\infty}A_{(p_0,z)}(t)=
a_+JS_{\Theta},\qquad A_{(p_0,z)}(-\infty)=\lim_{t\rightarrow-\infty} A_{(p_0,z)}(t)=
a_-JS_{0}
\end{align*}
for $z=e^{i\Theta}$, $\Theta\in[-\pi,\pi]$,

\[S_{\Theta}=\begin{pmatrix}
\cos\Theta&\sin\Theta\\
\sin\Theta&-\cos\Theta
\end{pmatrix}\]
and real numbers $a_\pm\neq 0$. Note that this is in line with the assumptions of the previous section as $a_\pm S_{\Theta}$ are hyperbolic for all $\Theta\in[-\pi,\pi]$.\\
We consider the canonical embedding $\iota:S^1\hookrightarrow X\times S^1$, $z\mapsto(p_0,z)$ and note that 

\[E^s_{p_0}(\pm\infty):=\iota^\ast(E^s(\pm\infty))\in \widetilde{KO}(S^1)\]
are the stable bundles of the equations

\begin{align}\label{diffequ}
Ju'(t)+A((p_0,z),\pm\infty)u(t)&=0,\quad t\in\mathbb{R}.
\end{align}
Now $S_{\Theta}$ is the reflection by the line $e_2(\Theta)=(\cos(\frac{\Theta}{2}),\sin(\frac{\Theta}{2}))$, and consequently $\{e_1(\Theta), e_2(\Theta)\}$ for
$e_1(\Theta)=(-\sin(\frac{\Theta}{2}),\cos(\frac{\Theta}{2}))$ is a basis of eigenvectors of $S_\Theta$ with corresponding eigenvalues $-1$ and $1$. Clearly, $E^s_{p_0}(-\infty)$ is a product bundle, where the fibre is spanned by $(1,0)$ if $a_->0$ and by $(0,1)$ if $a_-<0$. For $E^s_{p_0}(+\infty)$, we just note that every solution of \eqref{diffequ} is of the form

\[u(t)=W\begin{pmatrix}
e^{a_+t}&0\\
0&e^{-a_+t}
\end{pmatrix}W^{-1}c,\quad c\in\mathbb{R}^2,\]
where $W=(e_1(\Theta),e_2(\Theta))$. Consequently,

\begin{align*}
E^s_{p_0}(+\infty)&=\{(z,u)\in S^1\times\mathbb{R}^2:\, u\in\spa\{e_1(\Theta)\}\}\quad \text{if}\, a_+<0,\\
E^s_{p_0}(+\infty)&=\{(z,u)\in S^1\times\mathbb{R}^2:\, u\in\spa\{e_2(\Theta)\}\}\quad \text{if}\, a_+>0,
\end{align*}
and in both cases $E^s_{p_0}(+\infty)$ is isomorphic to the M\"obius bundle and thus non-orientable.\\
In summary, we obtain

\begin{align*}
\iota^\ast w_1(E^s(+\infty))&=w_1(\iota^\ast E^s(+\infty))=w_1(E^s_{p_0}(+\infty))\neq  w_1(E^s_{p_0}(-\infty))=w_1(\iota^\ast E^s(-\infty))\\
&=\iota^\ast w_1(E^s(-\infty)),
\end{align*}
and consequently

\[w_1(E^s(+\infty))\neq w_1(E^s(-\infty)).\]
This shows by Theorem \ref{thm:Stiefel-Whitney} that, if there is some $(p,z)\in X\times S^1$ such that \eqref{Hamiltonianlin} only has the trivial solution, then the set $B\subset X\times S^1$ of bifurcation points of \eqref{Hamiltoniannonlin} is not empty. Moreover, if $m=\dim(X)\geq 1$, then $B$ has at least covering dimension $m$ and is not contractible to a point. 

\subsection{An Example of Pejsachowicz}
The aim of this final section is to revisit an example from \cite{PortaluriNils} that originated from the proof of the main theorem of \cite{Jacobo}. We consider the Hamiltonian systems \eqref{Hamiltoniannonlin} for $\Lambda=T^m=S^1\times\cdots\times S^1$ and

\begin{align}\label{Az}
A_\lambda(t)=\begin{cases}
(\arctan t)JS_{\Theta_1+\ldots +\Theta_m},\quad t\geq 0\\
(\arctan t) JS_0,\quad t<0,
\end{cases},
\end{align}
where $\lambda=(e^{i\Theta_1},\ldots,e^{i\Theta_m})\in T^m$, $\Theta_j\in[-\pi,\pi]$, $j=1,\ldots,m$, and $S_\Theta$ is defined as before. In order to apply the bifurcation result of the previous section, we only need to show that 

\begin{equation}\label{HamiltoniannonlinII}
\left\{
\begin{aligned}
Ju'(t)+A_\lambda(t)u(t)&=0,\quad t\in\mathbb{R}\\
\lim_{t\rightarrow\pm\infty}u(t)&=0,
\end{aligned}
\right.
\end{equation}
only has the trivial solution for some $\lambda_0\in T^m$. Let us recall that the stable and unstable spaces of \eqref{HamiltoniannonlinII} are

\begin{align*}
E^s(\lambda,0)&=\{u(0)\in\mathbb{R}^2:\,Ju'(t)+A_\lambda(t)u(t)=0,\, t\in\mathbb{R}; u(t)\rightarrow 0, t\rightarrow\infty\}\\ 
E^u(\lambda,0)&=\{u(0)\in\mathbb{R}^2:\,Ju'(t)+A_\lambda(t)u(t)=0,\, t\in\mathbb{R}; u(t)\rightarrow 0, t\rightarrow-\infty\}.
\end{align*}
The space $\mathbb{R}^2$ is symplectic with respect to the canonical symplectic form. As the matrices $JS_\lambda(t)$ converge uniformly in $\lambda$ to families of hyperbolic matrices for $t\rightarrow\pm\infty$, it can be shown that $E^s(\lambda,0)$ and $E^u(\lambda,0)$ are Lagrangian subspaces of $\mathbb{R}^2$ (see, e.g., \cite[Lemma 4.1]{Homoclinics}). This implies in particular that $E^s(\lambda,0)$ and $E^u(\lambda,0)$ are one-dimensional.\\
Clearly, there is a non-trivial solution of \eqref{HamiltoniannonlinII} if and only if $E^u(\lambda,0)\cap E^s(\lambda,0)\neq \{0\}$. By a direct computation, one verifies that

\begin{align*}
u_-(t)&=\sqrt{t^2+1}e^{-t\arctan(t)}\begin{pmatrix} 1\\0\end{pmatrix},\, t\leq 0,\\ u_+(t)&=\sqrt{t^2+1}e^{-t\arctan(t)}\begin{pmatrix} \cos\left(\frac{\Theta_1+\ldots+\Theta_m}{2}\right)\\ \sin\left(\frac{\Theta_1+\ldots+\Theta_m}{2}\right)\end{pmatrix},\,t\geq 0,
\end{align*}
are solutions of \eqref{HamiltoniannonlinII} on the negative and positive half-line, respectively, and so $u_-(0)\in E^u(\lambda,0)$ and $u_+(0)\in E^s(\lambda,0)$. As $u_+(0)$ and $u_-(0)$ are linearly dependent if and only if the second component of $u_+$ vanishes, we conclude that \eqref{HamiltoniannonlinII} has a non-trivial solution if and only if $\Theta_1+\ldots+\Theta_m\equiv 0\mod 2\pi$. Consequently, for $m\geq 2$ it follows from our previous section that the covering dimension of the set $B$ of all bifurcation points is at least $m-1$, and $B$ is not contractible to a point. This was shown in \cite[Thm. 4.3]{PortaluriNils}. For $m=1$, i.e. if $\Lambda=S^1$, we can conclude that there is a bifurcation point of \eqref{Hamiltoniannonlin}. Actually, as a non-trivial solution of \eqref{HamiltoniannonlinII} is necessary for a bifurcation point, $1=e^0\in S^1$ is the only bifurcation point of \eqref{Hamiltoniannonlin} in this case. This can also be obtained from the main theorem of \cite{Jacobo}.

\thebibliography{99}
 

\bibitem{KTheoryAtiyah} M.F. Atiyah, \textbf{K-Theory}, Addison-Wesley, 1989


\bibitem{BartschI} T. Bartsch, \textbf{The role of the J-homomorphism in multiparameter bifurcation theory}, Bull. Sci. Math. (2) \textbf{112}, 1988,  no. 2, 177--184

\bibitem{BartschII} T. Bartsch, \textbf{The global structure of the zero set of a family of semilinear Fredholm maps}, Nonlinear Anal. \textbf{17}, 1991, 313--331

\bibitem{Booss} B. Booss-Bavnbek, D. D. Bleecker, \textbf{Index theory - with applications to mathematics and physics}, International Press, Somerville, MA,  2013




\bibitem{FPContemp} P.M. Fitzpatrick, J. Pejsachowicz, \textbf{The fundamental group of the space of linear Fredholm operators and the global analysis of semilinear equations},
 Fixed point theory and its applications (Berkeley, CA, 1986), 
 47--87, Contemp. Math. \textbf{72}, Amer. Math. Soc., Providence, RI,  1988

\bibitem{FPSeveral} P.M. Fitzpatrick, J. Pejsachowicz, \textbf{Nonorientability of the Index Bundle and Several-Parameter Bifurcation}, Journal of Functional Analysis \textbf{98}, 1991, 42-58


\bibitem{FPR}  P.M. Fitzpatrick, J. Pejsachowicz, L. Recht, \textbf{Spectral Flow and Bifurcation of Critical Points of Strongly-Indefinite Functionals-Part I: General Theory}, J. Funct. Anal. \textbf{162}, 1999, 52--95







\bibitem{JacoboK} J. Pejsachowicz, \textbf{K-theoretic methods in bifurcation theory},
 Fixed point theory and its applications (Berkeley, CA, 1986), 
 193--206, Contemp. Math. \textbf{72}, Amer. Math. Soc., Providence, RI,  1988

\bibitem{Jacobohomoclinics} J. Pejsachowicz, \textbf{Bifurcation of homoclinics}, Proc. Amer. Math. Soc.  \textbf{136}, 2008, 111--118

\bibitem{Jacobo} J. Pejsachowicz, \textbf{Bifurcation of Homoclinics of Hamiltonian Systems}, Proc. Amer. Math. Soc. \textbf{136}, 2008, 2055--2065

\bibitem{Jacobohomoclinicsfamily} J. Pejsachowicz, \textbf{Topological Invariants of Bifurcation}, $C^{\ast}$-Algebras and Elliptic Theory II, 2008, 239-250

\bibitem{JacoboTMNAI} J. Pejsachowicz, \textbf{Bifurcation of Fredholm maps I. The index bundle and bifurcation}, Topol. Methods Nonlinear Anal. \textbf{38},  2011, 115--168

\bibitem{JacoboTMNAII} J. Pejsachowicz, \textbf{Bifurcation of Fredholm maps II. The dimension of the set of
 bifurcation points}, Topol. Methods Nonlinear Anal. \textbf{38},  2011, 291--305



\bibitem{PortaluriNils} A. Portaluri, N. Waterstraat, \textbf{A K-theoretical Invariant and Bifurcation for Homoclinics of Hamiltonian Systems}, J. Fixed Point Theory Appl. \textbf{19}, 2017, 833-851




\bibitem{RobertNils} R. Skiba, N. Waterstraat, \textbf{The Index Bundle and Multiparameter Bifurcation for Discrete Dynamical Systems}, Discrete Contin. Dynam. Systems \textbf{37}, 2017, 5603--5629

\bibitem{RobertNilsII} R. Skiba, N. Waterstraat, \textbf{Fredholm theory of families of discrete dynamical systems and its applications to bifurcation theory}, preprint, arXiv:2003.12433

\bibitem{Stuart} C.A. Stuart, \textbf{Bifurcation into spectral gaps}, Bull. Belg. Math. Soc. Simon Stevin  1995,  suppl., 59 pp.


\bibitem{indbundleIch} N. Waterstraat, \textbf{The index bundle for Fredholm morphisms}, Rend. Sem. Mat. Univ. Politec. Torino \textbf{69}, 2011, 299--315

\bibitem{Homoclinics} N. Waterstraat, \textbf{Spectral flow, crossing forms and homoclinics of Hamiltonian systems}, Proc. Lond. Math. Soc. (3) \textbf{111}, 2015, 275--304


\bibitem{NilsBif} N. Waterstraat, \textbf{A Remark on Bifurcation of Fredholm Maps}, Adv. Nonlinear Anal. \textbf{7}, 2018, 285--292

\bibitem{Banach Bundles} M.G. Zaidenberg, S.G. Krein, P.A. Kuchment, A.A. Pankov, \textbf{Banach Bundles and Linear Operators}, Russian Math. Surveys {\bf 30}, no. 5, 1975, 115--175

\vspace*{1.3cm}

Robert Skiba\\
Faculty of Mathematics and Computer Science\\
Nicolaus Copernicus University in Toru\'n\\
Poland\\
E-mail: robert.skiba@mat.umk.pl

\vspace{0.7cm}

Nils Waterstraat\\
Martin-Luther-Universit\"at Halle-Wittenberg\\
Naturwissenschaftliche Fakult\"at II\\
Institut f\"ur Mathematik\\
06099 Halle (Saale)\\
Germany\\
E-mail: nils.waterstraat@mathematik.uni-halle.de

\end{document}